\documentclass[12pt,french,a4paper]{article}
\usepackage[utf8]{inputenc}
\usepackage[T1]{fontenc}
\usepackage{amsmath}
\usepackage{amsfonts}
\usepackage{amssymb}
\newcommand{\ds}{\displaystyle}
\newcommand{\Ind}{\mathbb{I}}
\hyphenation{nil-po-tent}

%


\usepackage{graphicx}
\usepackage{epstopdf}
\DeclareGraphicsRule{.tif}{png}{.png}{`convert #1 `dirname #1`/`basename #1.tif`.png}
\setcounter{page}{1}

\newtheorem{defn}{D\'efinition}[section]
\newtheorem{defns}{D\'efinitions}[section]
\newtheorem{theo}{Th\'eor\`eme}[section]
\newtheorem{lemm}{Lemme}[section]
\newtheorem{coro}{Corollaire}[section]
\newtheorem{prop}{Proposition}[section]
\newtheorem{exemp}{Exemple}[section]
\newtheorem{remq}{Remarque}[section]
\newenvironment{proof}{\textbf{Preuve : }}{\hfill$\square$}

\begin{document}
\thispagestyle{empty}


\title{Nilpotence dans les alg\`ebres de Malcev\footnote{ \`A la m\'emoire des Professeurs Akry Koulibaly et Artibano Micali.     
}}

\author{C\^ome J. A. B\'ER\'E\footnote{first author email: bere\_jean0@yahoo.fr}, Nakelgbamba B. PILABR\'E\footnote{second author email: pilabrenb@yahoo.fr}\\
 and Moussa OUATTARA\footnote{third author email: ouatt\_ken@yahoo.fr}\\%
Laboratoire T.      N.      AGATA 
       /UFR-SEA\\
        D\'epartement de Math\'ematiques /  
Universit\'e de Ouagadougou%
\\
      Adresse 03 B.      P.      7021 Ouagadougou, Burkina Faso 03\\
}
%
%
\maketitle
\begin{abstract}
The main result  is to prove that if a Malcev algebra  $A$ is \textit{right nilpotent} of degree $n$, then $A$ is \textit{strongly nilpotent} of degree less or equals to $4n^2-2n+1$.\par 
\centerline{\textbf{R\'esum\'e}} \par 
Nous prouvons    que toute alg\`ebre de Malcev $A$ \textit{ nilpotente \`a droite} d'indice $n$ est  \textit{fortement nilpotente} d'un indice inf\'erieur ou \'egal \`a \mbox{$4n^2-2n+1$}.        
\end{abstract}

\textbf{Keywords.} \textit{ Alg\`ebre de Malcev,  nilpotent
\`a droite, nilpotent, fortement nilpotent, indice}.      \par
\textbf{2010 Mathematics Subject Classification :}  17D10 

\section{Introduction}
Lorsque l'alg\`ebre $A$ est de Lie ou alternative, il est facile de montrer que pour un id\'eal $B$, les trois notions de nilpotence, \`a savoir : $B$ est \textit{ nilpotent \`a droite}, $B$ est  \textit{nilpotent} et $B$ est \textit{fortement nilpotent} sont \'equivalentes.      

Pour certaines alg\`ebres non associatives, ce n'est pas  le cas.      Une alg\`ebre de Jordan  peut poss\'eder un id\'eal nilpotent qui n'est pas fortement nilpotent.      De m\^eme l'ag\`ebre de Leibniz de dimension deux  est nilpotente \`a gauche  et non nilpotente \`a droite (\cite[Exemple 3.     2]{beromfpil} ou \cite[Exemple 3.     3]{berpilkob}).      Dans \cite{gmica1, gmica2} M.      Gerlein et A.      Micali, ont montr\'e l'\'equivalence de ces trois notions pour un id\'eal $B$ d'une alg\`ebre de Malcev $A$.      Cependant, certaines d\'emonstrations   comportaient quelques points d'ombres 
 (voir \cite{shest}).     

Gr\^ace \`a l'introduction de nouveaux outils, 
nous montrons que si $A$ est nilpotent \`a droite d'indice $n$, alors  $A$ est fortement nilpotent  d'indice inf\'erieur ou \'egal \`a  $4n^2-2n+1$.     \par 
Nous montrons aussi l'\'equivalence de ces trois notions pour un id\'eal $B$ d'une alg\`ebre de Malcev $A$ qui est $J_k$-nil.     


Nous commen\c cons par  un rappel de quelques d\'efinitions et exemple sur les alg\`ebres de Malcev, puis dans la section \ref{motdroit} nous \'etablissons quelques r\'esultats sur les produits droits de longueurs $n$.  
 La section \ref{poids} est d\'edi\'ee \`a l'\'etude des produits droits de poids $n$. 
 Dans la section \ref{mainresult}, nous \'enoncons un th\'eor\`eme dans le cas d'un id\'eal $J_k$-nil.      Puis nous montrons que les trois types de nilpotence sont \'equivalents pour  une alg\`ebre de Malcev $A$ (cf.      le corollaire \ref{equiv}).

\section{Pr\'eliminaires}\label{prelim}
\begin{defn}
 \label{def1.1}Par la suite, sauf mention expresse du
contraire, $K$ d\'esignera un corps commutatif de caract\'eristique diff\'erente
de 2 et tout espace vectoriel sera  de dimension finie
sur $K$.      Si $A$ est une $K$-alg\`ebre, non n\'ecessairement associative,
l'application $K$-trilin\'eaire $J:A\times A\times A\longrightarrow A$
d\'efinie par $(x,y,z)\longmapsto(xy)z+(yz)x+(zx)y$ est appel\'ee le \textit{jacobien}
de $A$.     \par
Soient $U,V,W$ trois sous alg\`ebres de $A$.      $J\left(U,V,W\right)$ est le sous espace vectoriel de $A$ engendr\'e par les \'el\'ements de la forme $J\left(u,v,w\right)$ o\`u $u\in U$, $v\in V$ et $w\in W$.     \par
 On dira que $A$ est une \textit{alg\`ebre de Malcev} si
l'une quelconque des trois  conditions \'equivalentes suivantes est v\'erifi\'ee : 
\begin{description}
\item [1.] $x^{2}=0$ pour tout $x$ dans $A$ et $J(x,y,xz)=J(x,y,z)x$,
quel que soient $x,y,z$ dans $A$ ; 
\item [2.] $x^{2}=0$ pour tout $x$ dans $A$ et $J(x,xy,z)=J(x,y,z)x$,
quel que soient $x,y,z$ dans $A$ ; 
\item [3.] $x^{2}=0$ pour tout $x$ dans $A$ et $(xy)(xz)=((xy)z)x+((yz)x)x+((zx)x)y$,\\
 quels que soient $x,y,z$ dans $A$.

Il est clair que ces d\'efinitions et \'equivalences ne d\'ependent pas
de la caract\'eristique de $K$ mais si celle-ci est diff\'erente de 2,
les conditions ci-dessus mentionn\'ees sont encore \'equivalentes \`a la
suivante :

\item [4.] $x^{2}=0$ et $(xz)(yt)=((xy)z)t+((yz)t)x+((zt)x)y+((tx)y)z,$
quels que soient $x,y,z,t$ dans $A$.

Si la caract\'eristique de $K$ est diff\'erente de 2, on  a l'\'egalit\'e
(cf.      \cite{bere1}) :

\item [5.] $J\left(x,y,z\right)t=J\left(t,x,zy\right)+J\left(t,y,xz\right)+J\left(t,z,yx\right),$
quels que soient $x,y,z,t$ dans $A$.      
\end{description}
\end{defn}

Si $A$ est une $K$-alg\`ebre, consid\'erons l'alg\`ebre not\'ee $A^{-}$
dont l'espace vectoriel sous-jacent co\"\i ncide avec $A$ et dont la
multiplication est d\'efinie par $[x,y]=xy-yx$ pour $x,y$ parcourant
$A$.      On v\'erifie sans peine, que si $A$ est une alg\`ebre associative,
l'alg\`ebre $A^{-}$ est de Lie et si $A$ est une alg\`ebre alternative,
l'alg\`ebre $A^{-}$ est de Malcev.      De plus, le jacobien d'une alg\`ebre
de Lie \'etant nul, toute alg\`ebre de Lie est de Malcev.      \par

\begin{exemp}
Soit $A$ la $K$-alg\`ebre de dimension 4 dont
la table de multiplication relative \`a une base $\{e_{1},e_{2},e_{3},e_{4}\}$
s'\'ecrit 
$e_1e_2=e_1=-e_2e_1$, $e_1e_2=e_1=-e_2e_1$, $e_3e_1=e_4=-e_1e_3$, $e_3e_2=e_3=-e_2e_3$, $e_2e_4=e_4=-e_4e_2$ et tous les autres produits \'etant nuls. 
On v\'erifie que $A$ est une alg\`ebre
de Malcev (par calcul direct), non de Lie si la caract\'eristique de $K$ est aussi diff\'erente
de $3$ (car $J(e_{1},e_{2},e_{3})=-3e_{4}$).      Si $K$ est un corps de
caract\'eristique $3$, cette alg\`ebre est de Lie. 
\end{exemp}
 On renvoie \`a \cite{akry, malek} pour les renseignements
compl\'ementaires concernant les alg\`ebres de Malcev.

Trois types de nilpotence d'un id\'eal $B$ d'une alg\`ebre de Malcev $A$ ont \'et\'e introduits par
A.      Micali et Ch.      Gerlein dans \cite{gmica1,gmica2}.      Rappelons les :

\addtocounter{defns}{1}
\addtocounter{defn}{1}
\begin{defns}
Soit un id\'eal $B$ d'une alg\`ebre de Malcev $A$, on introduit
les notations et terminologies suivantes :\par      
Soit $P$ un produit de $m$ facteurs $s_{m},s_{m-1},\cdots,s_{1}$
distincts ou non, associ\'es d'une mani\`ere quelconque et dont $n$   ($n\leq m$) ou
plus de ses facteurs appartiennent \`a $B$.      
On dira que le produit $P$ est de longueur $m$ et de poids $n$
relativement \`a $B$ ou plus simplement que $P$ est de longueur $m$ et de poids $n$ dans $B$.      La longueur $m$ de $P$ sera not\'ee $\#\left(P\right)$
et son poids $n$ relativement \`a $B$ sera not\'ee $\#_{B}\left(P\right)$.     
\par
 Lorsque $P=\left(\left(\cdots\left(\left(s_ms_{m-1}\right)s_{m-2}\cdots\right)s_3\right)s_2\right)s_1$  
 o\`u l'association
est faite \`a droite syst\'ematiquement, on dira que $P$ est un \textit{produit droit} et on \'ecrira alors simplement $P=s_ms_{m-1}s_{m-2}\cdots s_{1}$, sans parenth\`eses.\\ 
Soient $S_1,S_2,\cdots,S_p$ des produits droits. On peut en faire le produit \`a droite $N=S_1S_2\cdots S_p$.
 On dira que $N$ est un produit normal.    
\begin{itemize}
\item  $B^{n}$ le sous-espace vectoriel de l'alg\`ebre $A$ engendr\'e par tous
les produits droits de longueur $n$ et de poids $n$ relativement
\`a $B$.      Par convention, on posera $B^{0}=A$.     
Et on dit que l'id\'eal
$B$ est \textit{nilpotent \`a droite} s'il existe un entier $n\geq1$
tel que $B^{n}=\left\{ 0\right\} $.     \par
 En particulier $A^{n}$ est engendr\'e
par tous les produits droits de longueur $n$ et c'est un id\'eal de l'alg\`ebre $A$.      
\item  $B^{\{n\}}$ le sous-espace vectoriel de $A$ engendr\'e par les produits
de $n$ \'el\'ements de $B$ associ\'es d'une fa\c con quelconque.      Et on dit
que l'id\'eal $B$ est \textit{nilpotent} s'il existe un entier $n\geq1$
tel que $B^{\{n\}}=\left\{ 0\right\} $.      
\item  $B^{\left\langle n\right\rangle }$ l'ensemble des sommes de produits
d'\'el\'ements de $A$ avec au moins $n$ \'el\'ements de $B$.      On voit que
$B^{\left\langle n\right\rangle }$ est un id\'eal de $A$ et on a\\   $A\supseteq B^{\left\langle 1\right\rangle }\supseteq B^{\left\langle 2\right\rangle }\supseteq\cdots\supseteq B^{\left\langle n\right\rangle }\supseteq\cdots$
et $B^{\left\langle i\right\rangle }B^{\left\langle j\right\rangle }\subseteq B^{\left\langle i+j\right\rangle }$
quels que soient les entiers $i,j\geq1$.      On dit que l'id\'eal $B$
est \textit{fortement nilpotent} s'il existe un entier $n\geq1$ tel
que $B^{\left\langle n\right\rangle }=\left\{ 0\right\} $.      

D'une mani\`ere g\'en\'erale si $A$ est une alg\`ebre non associative qui n'est ni commutative, ni anticommutative, nous ajoutons la d\'efinition suivante :\par
 Lorsque $P=s_1\left(s_2\left(s_3\left(\cdots s_{m-2}\left(s_{m-1}s_m\right)\right)\cdots\right)\right)$ 
 o\`u l'association
est faite \`a gauche syst\'ematiquement, on dira que $P$ est un \textit{produit gauche}.     
 \item  $^{n}\!B$ le sous-espace vectoriel de l'alg\`ebre $A$ engendr\'e par tous les produits gauches de longueur $n$ et de poids $n$ relativement
\`a $B$.      Par convention, on posera $^0\!B=A$.      Et on dit que l'id\'eal
$B$ est \textit{nilpotent \`a gauche} s'il existe un entier $n\geq1$
tel que $^n\!B=\left\{ 0\right\} $.     \par
En particulier si $A$ est commutative ou anticommutative, on a $^n\!B$ co\"\i ncide avec $B^n$.     
\end{itemize}
\end{defns}

La preuve  qu'une alg\`ebre $A$ qui est \textit{ nilpotente \`a droite}
est une alg\`ebre \textit{fortement nilpotente} 
donn\'ee dans \cite{gmica1} n'est pas assez convainquante (cf.      \cite{shest}).      C'est pourquoi
nous d\'eveloppons d'autres outils qui vont nous permettre de re\'ecrire une preuve
plus rigoureuse.

\begin{defn} Soit $D$ un sous espace vectoriel d'une alg\`ebre de
Malcev $A$.      Soit $k$ un entier sup\'erieur ou \'egal \`a $1$.      Notons $D_{(A,k)}=D\underset{k\textnormal{ facteurs}}{\underbrace{AAA\dots A}}$
le sous espace vectoriel engendr\'e par tous les produits droits de
la forme $da_{k}\cdots a_{3}a_{2}a_{1}$ o\`u $d\in D$ et $a_{i}\in A$
pour tout entier $i$, tel que $1\leq i\leq k$.      
\end{defn}
\begin{defn} Soit $B$ un id\'eal non nul d'une alg\`ebre de Malcev $A$.     
Si l'id\'eal $J\left(B,A,A\right)$ satisfait pour un entier $k$ sup\'erieur ou \'egal \`a $1$,
la relation $J\left(B,A,A\right)_{(A,k)}=\left\{ 0\right\} $,%
on dira que $B$ est \textit{$J_{k}$-nil} dans $A$.       
\end{defn}

\begin{defn} Soient $A$ une alg\`ebre de Malcev et $B$ un id\'eal 
de $A$.      Soient $a,b$ deux \'el\'ements de A.      On dira que $a$ et $b$
sont \'egaux modulo $B$ si leur diff\'erence est dans $B$.      
\end{defn}

\begin{defn} Soit $n$ un entier naturel non nul. On pose par d\'efinition 
$\Ind(n)=\{1,2,\cdots,n\}$.
\end{defn}

\section{Produits droits dans l'alg\`ebre de Malcev $A$}\label{motdroit}

La preuve du lemme \ref{drf} se trouve dans \cite{bere1}, mais nous la reprenons.
\begin{lemm}\label{drf}  Soit $A$ une alg\`ebre de Malcev de dimension
finie et $B$ un id\'eal de $A$.      Posons $B_{0}=A$, $B_{1}=B$ et
$B_{k}=B^{k}+J(B,A,A)$ pour tout entier naturel $k$ sup\'erieur ou \'egal \`a $2$ ; $B_{k}$
est un id\'eal de $A$ v\'erifiant $B_{k}\supseteq B_{k+1}$.
\end{lemm}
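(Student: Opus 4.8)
The plan is to reduce everything to a single closure statement, $B^kA\subseteq B^k+J(B,A,A)$, after first checking that $J(B,A,A)$ is itself an ideal. I would start from two structural facts. Since $x^{2}=0$ in $A$, the product is anticommutative ($xy=-yx$), and swapping two arguments in $J(x,y,z)=(xy)z+(yz)x+(zx)y$ shows, using anticommutativity, that $J$ is alternating on $A$. As $B$ is an ideal, every term of $J(b,a_1,a_2)=(ba_1)a_2+(a_1a_2)b+(a_2b)a_1$ lies in $B$, so $J(B,A,A)\subseteq B$. To obtain that $J(B,A,A)$ is an ideal I would feed $x=b\in B$ into the identity $J(x,y,z)t=J(t,x,zy)+J(t,y,xz)+J(t,z,yx)$ of D\'efinition~\ref{def1.1}: since $bz,yb\in B$ and $J$ is alternating, each jacobian on the right-hand side lands in $J(B,A,A)$, whence $J(B,A,A)\,A\subseteq J(B,A,A)$, and anticommutativity upgrades this to a two-sided ideal.

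The core is the claim $B^kA\subseteq B^k+J(B,A,A)$ for all $k\ge 1$, which I would prove by induction on $k$. The base $k=1$ is just $BA\subseteq B$. For the inductive step I would write a generator of $B^k$ as $wb_1$ with $w=b_k\cdots b_2\in B^{k-1}$ and $b_1\in B$, and expand $(wb_1)a=J(w,b_1,a)-(b_1a)w-(aw)b_1$. The first term is $-J(b_1,w,a)\in J(B,A,A)$ by the alternating property; the second is $-w(b_1a)\in B^{k-1}B=B^k$ because $b_1a\in B$. The last term $(aw)b_1$ is the genuine difficulty: $aw$ is an arbitrary element of $A$ and the product appears to raise the length past $k$. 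I would defuse it by writing $aw=-wa\in B^{k-1}A$ and applying the induction hypothesis to get $aw=u+j$ with $u\in B^{k-1}$ and $j\in J(B,A,A)$; then $(aw)b_1=ub_1+jb_1$, where $ub_1\in B^{k-1}B=B^k$ and $jb_1\in J(B,A,A)\,A\subseteq J(B,A,A)$, the last inclusion being exactly where the ideal property of $J(B,A,A)$ is used. This closes the induction.

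From the claim the lemma follows quickly. For $k\ge 2$ we get $B_kA=(B^k+J(B,A,A))A\subseteq B^k+J(B,A,A)=B_k$, and anticommutativity gives $AB_k\subseteq B_k$, so $B_k$ is an ideal, while $B_1=B$ and $B_0=A$ are ideals by hypothesis. For the descending chain, $B_0\supseteq B_1$ is clear, $B_1\supseteq B_2$ follows from $B^2\subseteq B$ together with $J(B,A,A)\subseteq B$, and for $k\ge 2$ the claim yields $B^{k+1}=B^kB\subseteq B^kA\subseteq B^k+J(B,A,A)=B_k$, hence $B_{k+1}=B^{k+1}+J(B,A,A)\subseteq B_k$. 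The one step I expect to resist a one-line treatment is precisely the term $(aw)b_1$ above: it is what forces both the induction on $k$ and the preliminary fact that $J(B,A,A)$ is an ideal, rather than a direct computation.
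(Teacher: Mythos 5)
Your proof is correct and follows essentially the same route as the paper's: both hinge on expanding a product of a weight-$k$ generator with an element of $A$ via the Jacobian identity $(xy)z=J(x,y,z)-(yz)x-(zx)y$ (the paper writes this at the level of subspaces as $\left(B^{k}B\right)A\subseteq J(B^{k},B,A)+(BA)B^{k}+(AB^{k})B$), and both close the induction by letting the induction hypothesis absorb the troublesome term $(AB^{k})B$ --- your $(aw)b_{1}$ --- together with the inclusion $J(B,A,A)A\subseteq J(B,A,A)$. The only difference is that you explicitly prove two facts the paper declares \emph{bien connu} or leaves implicit, namely that $J(B,A,A)$ is an ideal (via the identity $J(x,y,z)t=J(t,x,zy)+J(t,y,xz)+J(t,z,yx)$ and the alternating character of $J$) and the chain inclusions $B_{k}\supseteq B_{k+1}$, which makes your write-up slightly more self-contained than the original.
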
      
\begin{proof} Il est bien connu que $J(B,A,A),B_{0},B_{1}$ sont
des id\'eaux.      Supposons que pour un entier $k\geq2$, $B_{k}$ soit
un id\'eal, alors on a $B_{k}.     A\subseteq B^{k}+J(B,A,A)$.      Montrons
que $B_{k+1}$ est aussi un id\'eal.      En effet on a ; 
\begin{align*}
B_{k+1}.     A & =\left(B^{k+1}+J(B,A,A)\right).     A\\
 & \subseteq B^{k+1}.     A+J(B,A,A)\\
 & \subseteq\left(B^{k}.     B\right).     A+J(B,A,A)\\
 & \subseteq J(B^{k},B,A)+\left(B.     A\right).     B^{k}+\left(A.     B^{k}\right).     B+J(B,A,A)\\
 & \subseteq B.     B^{k}+\left(B^{k}+J(B,A,A)\right).     B+J(B,A,A)\\
 & \subseteq B^{k+1}+J(B,A,A)=B_{k+1}.     
\end{align*}
 \end{proof}

\begin{prop}\label{prop:P-som} 
Soient un produit droit $P_{0}=a_{m}a_{m-1}a_{m-2}\cdots a_{3}a_{2}a_{1}$   et 
$Q_{0}$ un produit quelconque de longueur $m'$. 
 Posons  
pour tout entier $0\leq i\leq m-1$, $P_{i}=a_ma_{m-1}\cdots a_{i+1}=\ds\prod_{j=1}^{m-i}a_{m-j+1}$ et pour tout entier $0\leq r\leq m-2$,  $Q_{r+1}=a_{r+1}Q_{r}$.
Alors : 
$$
T_{m}=Q_{0}P_{0}=\sum_{i=1}^{m-1}Q_{i-1}P_{i}a_{i}+Q_{m-1}a_{m}-\ds\sum_{i=1}^{m-1}J\left(Q_{i-1},P_i,a_{i}\right).     $$
\end{prop}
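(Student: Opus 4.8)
The plan is to prove the formula by telescoping a single elementary identity, obtained from the jacobian identity together with the anticommutativity of Malcev algebras. Two structural facts are the only inputs. First, since $x^{2}=0$ for every $x$ in $A$, expanding $(x+y)^{2}=0$ gives $xy=-yx$ for all $x,y$, so $A$ is anticommutative. Second, the jacobian $J(x,y,z)=(xy)z+(yz)x+(zx)y$ is invariant under cyclic permutation of its arguments, $J(x,y,z)=J(y,z,x)=J(z,x,y)$, directly from its definition. No deeper Malcev axiom is needed here.

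I would first record the defining recursions in the form used below: $P_{i-1}=P_{i}a_{i}$ for $1\le i\le m-1$ (so that $P_{m-1}=a_{m}$), and $Q_{i}=a_{i}Q_{i-1}$ for $1\le i\le m-1$. The crux is then the following one-step identity, valid for each $i$ with $1\le i\le m-1$:
\[ Q_{i-1}P_{i-1}=Q_{i-1}P_{i}a_{i}+Q_{i}P_{i}-J(Q_{i-1},P_{i},a_{i}). \]
To obtain it, write $Q_{i-1}P_{i-1}=Q_{i-1}(P_{i}a_{i})=-(P_{i}a_{i})Q_{i-1}$ by anticommutativity, expand the right-hand product via $J(P_{i},a_{i},Q_{i-1})=(P_{i}a_{i})Q_{i-1}+(a_{i}Q_{i-1})P_{i}+(Q_{i-1}P_{i})a_{i}$, and substitute $a_{i}Q_{i-1}=Q_{i}$ together with the cyclic identity $J(P_{i},a_{i},Q_{i-1})=J(Q_{i-1},P_{i},a_{i})$. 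The only delicate points are tracking the single sign coming from anticommutativity and choosing the cyclic representative of the jacobian so that its arguments appear in the order $(Q_{i-1},P_{i},a_{i})$ demanded by the statement.

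Finally I would telescope, which amounts to a finite induction on the index. Applying the one-step identity with $i=1$ turns $T_{m}=Q_{0}P_{0}$ into $Q_{0}P_{1}a_{1}+Q_{1}P_{1}-J(Q_{0},P_{1},a_{1})$; applying it with $i=2$ to the remaining term $Q_{1}P_{1}$, then $i=3$ to $Q_{2}P_{2}$, and so on up to $i=m-1$, each step peels off one summand $Q_{i-1}P_{i}a_{i}$ and one jacobian $J(Q_{i-1},P_{i},a_{i})$ while replacing $Q_{i-1}P_{i-1}$ by $Q_{i}P_{i}$. After the step $i=m-1$ the residual term is $Q_{m-1}P_{m-1}=Q_{m-1}a_{m}$, which is exactly the boundary term of the claim, and collecting the peeled pieces gives
\[ T_{m}=\sum_{i=1}^{m-1}Q_{i-1}P_{i}a_{i}+Q_{m-1}a_{m}-\sum_{i=1}^{m-1}J(Q_{i-1},P_{i},a_{i}). \]
I do not expect any genuine obstacle beyond this bookkeeping; note in particular that $Q_{0}$ enters only as an arbitrary element, so its length $m'$ and internal associativity pattern play no role in the argument.
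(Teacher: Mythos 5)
Your proof is correct and takes essentially the same route as the paper: both arguments rest on the single one-step identity $Q_{i-1}\left(P_{i}a_{i}\right)=\left(Q_{i-1}P_{i}\right)a_{i}+Q_{i}P_{i}-J\left(Q_{i-1},P_{i},a_{i}\right)$, obtained from anticommutativity and the cyclic invariance of the jacobian, which the paper simply packages as an induction on $m$ (with reindexed variables $Q'_{i-1}=Q_{i}$, $P'_{i}=P_{i+1}$) rather than as your explicit telescoping. Your sign bookkeeping and the remark that $Q_{0}$ enters only as an arbitrary element both agree with the paper's proof, so there is nothing to fix.
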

\begin{proof}  Avant de faire la preuve fixons quelques d\'efinitions de produits :

$m$ d\'esignera  la longueur de $P$.\\ 
Posons pour $1\leq i\leq m-1$, $Q'_{i-1}=Q_{i}$, $a'_{m-i+1}=a_{m-i+2}$. Nous avons alors,  $$P'_i=\ds\prod_{j=1}^{m-i}a'_{m-j+1}=\ds\prod_{j=1}^{m-i}a_{m-j+2}=\prod_{j=1}^{m+1-(i+1)}a_{m+1-j+1}=P_{i+1}.$$
Il est clair que si   $m=2$, on a :\\
$\begin{aligned} T_{2}&=Q_{0}\left(a_{2}a_{1}\right) =Q_{0}a_{2}a_{1}-Q_0a_1a_2-J\left(Q_{0},a_{2},a_{1}\right),\textnormal{ et si  }    m=3:\\
T_{3}&=Q_{0}\left(a_{3}a_{2}a_{1}\right)\\ 
& =  Q_{0}\left(a_{3}a_{2}\right)a_{1}+a_{1}Q_{0}\left(a_{3}a_{2}\right)-J\left(Q_{0},a_{3}a_{2},a_{1}\right)\\
 & =  Q_{0}\left(a_{3}a_{2}\right)a_{1}+Q_{1}\left(a_{3}a_{2}\right)-J\left(Q_{0},a_{3}a_{2},a_{1}\right)\\
 & = Q_{0}\left(a_{3}a_{2}\right)a_{1}+Q_{1}a_{3}a_{2}+a_{2}Q_{1}a_{3}-J\left(Q_{1},a_{3},a_{2}\right)-J\left(Q_{0},a_{3}a_{2},a_{1}\right)\\
 & = Q_0P_1a_1+Q_1a_3a_2+Q_2a_3-J\left(Q_{1},a_{3},a_{2}\right)-J\left(Q_{0},a_{3}a_{2},a_{1}\right).
\end{aligned}
$\par

Posons en hypoth\`ese que pour un produit $P$ de longueur $m$ on
a :\par
 \begin{equation}\label{SionPm}
     T_{m}=\sum_{i=1}^{m-1}Q_{i-1}P_{i}a_{i}+Q_{m-1}a_{m}-\ds\sum_{i=1}^{m-1}J\left(Q_{i-1},P_i,a_{i}\right).     
    \end{equation} 

Alors on aura pour un produit $P=a_{m+1}a_{m}a_{m-1}\cdots a_{3}a_{2}a_{1}$ de longueur $m+1$ :\par 
$\begin{aligned}T_{m+1} & =  Q_{0}\ds\prod_{k=1}^{m+1}a_{m-k+2}
 =Q_{0}\left[\ds\prod_{k=1}^{m}a_{m-k+2}a_{1}\right]\\ 
 & = Q_{0}\ds\prod_{k=1}^{m}a_{m-k+2}a_{1}+a_1Q_0\ds\prod_{k=1}^{m}a_{m-k+2}-J\left(Q_0,\ds\prod_{k=1}^{m}a_{m-k+2},a_{1}\right)\\
 & = Q_{0}\ds\prod_{k=1}^{m}a_{m-k+2}a_{1}+Q_{1}\ds\prod_{k=1}^{m}a_{m-k+2}-J\left(Q_{0},\ds\prod_{k=1}^{m}a_{m-k+2},a_{1}\right) 
\end{aligned}$\par 


Il s'ensuit alors que le produit $T_{m+1}$ vaut,  \par

\begin{align}T_{m+1} & =  Q_{0}\ds\prod_{k=1}^{m+1}a_{m-k+2}
 =Q_{0}\left[\ds\prod_{k=1}^{m}a_{m-k+2}a_{1}\right]\\
 &=Q_{0}\left(P'_0a_{1}\right)\\
 & = Q_{0}P'_0a_{1}+Q_{1}P'_0-J\left(Q_{0},P'_0,a_{1}\right) \\
& = Q_{0}P_1a_{1}+Q'_0P'_0-J\left(Q_{0},P_1,a_{1}\right) \label{equ3}
\end{align}\par 

Par application de l'hypoth\`ese de r\'ecurrence (\ref{SionPm})  \`a $Q'_0P'_0$ (car $P'_0$ est de longueur $m$) :

$\begin{aligned}
Q'_0P'_0 & =\sum_{i=1}^{m-1}Q'_{i-1}P'_ia'_{i}+Q'_{m-1}a'_{m}-\ds\sum_{i=1}^{m-1}J\left(Q'_{i-1},P'_i,a'_{i}\right)\\
 & =\sum_{i=1}^{m-1}Q_{i}P_{i+1}a_{i+1}+Q_{m}a_{m+1}-\ds\sum_{i=1}^{m-1}
J\left(Q_{i},P_{i+1},a_{i+1}\right)
\end{aligned}$\par

De l'\'equation (\ref{equ3}), il vient que :

$\begin{aligned}
T_{m+1} & = Q_{0}P_1a_{1}+Q'_0P'_0-J\left(Q_{0},P_1,a_{1}\right)\\
	& = Q_{0}P_1a_{1}+\left[\sum_{i=1}^{m-1}Q_{i}P_{i+1}a_{i+1}+Q_{m}a_{m+1}\right.     \\
	& \left.     \hphantom{+Q_{m}a_{m+1}AAAAAA}-\ds\sum_{i=1}^{m-1}
J\left(Q_{i},P_{i+1},a_{i+1}\right)\right]-J\left(Q_{0},P_1,a_{1}\right)\\
 & = \sum_{i=1}^{m}Q_{i-1}P_{i}a_{i}+Q_{m}a_{m+1}-\ds\sum_{i=1}^{m}
J\left(Q_{i-1},P_{i},a_{i}\right).
\end{aligned}$
\end{proof}

\begin{remq}\label{remq}
Sous  les hypot\`eses de la proposition \ref{prop:P-som} et  en 
%
consid\'erant le tableau suivant, il vient que :

\begin{table*}[h]
\begin{tabular}{||l||l||l||l||l}
\hline 
 $P_{m-1}=a_{m}$  & $P_{m-2}=P_{m-1}a_{m-1}$  & $\cdots$  & $P_{i-1}=P_{i}a_{i}$  & $\cdots$ \tabularnewline
\hline 
 $Q_{m-1}=a_{m-1}Q_{m-2}$  & $Q_{m-2}=a_{m-2}Q_{m-3}$  & $\cdots$  & $Q_{i-1}=a_{i-1}Q_{i-2}$  & $\cdots$ \tabularnewline
\hline 
\end{tabular}
\end{table*}

\begin{table*}[h]%
\begin{tabular}{l||l||l||l||l||l||}
\hline 
$\cdots$  & $P_{i+j}=P_{i+j+1}a_{i+j+1}$  & $\cdots$  & $P_{2}=P_{3}a_{3}$  & $P_{1}=P_{2}a_{2}$  & $P_{0}=P_{1}a_{1}$ \tabularnewline
\hline 
$\cdots$  & $Q_{i+j}=a_{i+j}Q_{i+j-1}$  & $\cdots$  & $Q_{2}=a_{2}Q_{1}$  & $Q_{1}=a_{1}Q_{0}$  & $Q_{0}=Q_{0}$ \tabularnewline
\hline 
\end{tabular}\label{alg5} 
\end{table*}

L'ensemble $\Lambda=\left\{(a_i)_{1\leq i\leq m},(b_j)_{1\leq j\leq m'}\right\}$ est l'ensemble des facteurs distincts ou non qui donne le produit $P$. On v\'erifie facilement qu'il donne \'egalement les produits $Q_{k-1}P_ka_k,Q_{m-1}a_m$ o\`u $k\in\Ind(m-1)$. 
Il vient alors que pour $k\in\Ind(m-1)$ :
\begin{eqnarray}
\#\left(P\right)\phantom{_B}&=\#\left(Q_{k-1}\right)+\#\left(P_k\right)+1\label{rem1.1}\\
\#_{B}\left(P\right)&=\#_{B}\left(Q_{k-1}\right)+\#_{B}\left(P_k\right)+
 \#_{B}\left(a_k\right)\label{rem1.2} \\
 \#\left(P\right)\phantom{_B}&=\#\left(Q_{m-1}\right)+1\label{rem1.3}\\
\#_{B}\left(P\right)&=\#_{B}\left(Q_{m-1}\right)+
 \#_{B}\left(a_m\right).\label{rem1.4}
\end{eqnarray}     
\end{remq}

\begin{lemm} \label{lem:nilpo1} Tout produit $T$ de longueur $m$
dans une alg\`ebre de Malcev $A$ de dimension finie est combinaison
lin\'eaire de produits droits de longueur $m$ modulo l'id\'eal $J(A,A,A)$.     
\end{lemm}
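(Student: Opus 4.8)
The plan is to argue by induction on the length $m$, with Proposition~\ref{prop:P-som} as the engine and the fact (recalled in the proof of Lemme~\ref{drf}) that $J(A,A,A)$ is an id\'eal of $A$. This last point is used twice: it guarantees that $J(T_1,T_2,T_3)\in J(A,A,A)$ for arbitrary products $T_1,T_2,T_3$, and it ensures that the class of a product modulo $J(A,A,A)$ is unchanged when one of its factors is replaced by a congruent element (since $J(A,A,A)\cdot A\subseteq J(A,A,A)$).

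For $m\le 2$ there is nothing to prove: a single factor is un produit droit, and any product $xy$ of length $2$ is already un produit droit. Assume the statement for every length $<m$ and let $T$ be a product of length $m\ge 2$. Writing $T=UV$ for its outermost decomposition, with $\#(U)=p\ge1$, $\#(V)=q\ge1$ and $p+q=m$, the induction hypothesis applied to $V$ (of length $q<m$) yields $V\equiv\sum_{\beta}V_{\beta}\pmod{J(A,A,A)}$ with each $V_{\beta}$ un produit droit of length $q$. Because $J(A,A,A)$ is an id\'eal, $T\equiv\sum_{\beta}UV_{\beta}\pmod{J(A,A,A)}$, so it suffices to treat a single product $WV_{\beta}$ where $W=U$ is arbitrary of length $p$ and $V_{\beta}$ is un produit droit of length $q$.

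If $q=1$, then $WV_{\beta}=Wa$ for a single factor $a$, and the induction hypothesis gives $W\equiv\sum_{\gamma}R_{\gamma}$ modulo $J(A,A,A)$ with $R_{\gamma}$ produits droits of length $m-1$; hence $Wa\equiv\sum_{\gamma}R_{\gamma}a$, a combination of produits droits of length $m$. If $q\ge2$, I apply Proposition~\ref{prop:P-som} with $Q_{0}=W$ and $P_{0}=V_{\beta}=a_{q}a_{q-1}\cdots a_{1}$, obtaining
$$WV_{\beta}=\sum_{i=1}^{q-1}Q_{i-1}P_{i}a_{i}+Q_{q-1}a_{q}-\sum_{i=1}^{q-1}J\left(Q_{i-1},P_{i},a_{i}\right),$$
where the last sum lies in $J(A,A,A)$. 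The decisive length count is that each term $Q_{i-1}P_{i}a_{i}$ equals $\left(Q_{i-1}P_{i}\right)a_{i}$, the right multiplication by the single factor $a_{i}$ of the product $Q_{i-1}P_{i}$, whose length is $\left(i-1+p\right)+\left(q-i\right)=m-1$; similarly $Q_{q-1}a_{q}$ is the right multiplication by $a_{q}$ of $Q_{q-1}$, of length $m-1$. Applying the induction hypothesis to these inner products of length $m-1$ and then appending the last factor on the right — which sends un produit droit of length $m-1$ to one of length $m$ — expresses $WV_{\beta}$ as a combination of produits droits of length $m$ modulo $J(A,A,A)$, which closes the induction.

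The only genuinely delicate point is the bookkeeping of lengths: one must verify that in every term produced by Proposition~\ref{prop:P-som} the part to which the induction hypothesis is applied has length exactly $m-1$, never $m$, so that the recursion strictly decreases and terminates. This is precisely the content of the length count above, and it rests on the additivity relations \eqref{rem1.1}--\eqref{rem1.4} of Remarque~\ref{remq}. Everything else is routine, the id\'eal property of $J(A,A,A)$ absorbing both the jacobian terms and the substitution $V\mapsto\sum_{\beta}V_{\beta}$.
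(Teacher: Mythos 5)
Your proof is correct and follows essentially the same route as the paper: induction on the length $m$, with Proposition~\ref{prop:P-som} applied to $Q_0P_0$, the id\'eal property of $J(A,A,A)$ absorbing the jacobian terms, and the induction hypothesis applied to the inner products $Q_{i-1}P_i$ and $Q_{q-1}$ of length $m-1$ before appending the last factor on the right. The only difference is that you explicitly justify the reduction to the case where the right-hand factor is un produit droit (and treat $q=1$ separately), a step the paper's proof takes for granted by directly writing $T=Q_0P_0$ with $P_0$ un produit droit.
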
 

\begin{proof}
Proc\'edons par r\'ecurrence sur la longueur $m$.      Lorsque
la longueur $m$ est inf\'erieure ou \'egale \`a $3$, c'est \'evident que
le r\'esultat est vrai.      Supposons alors la relation vraie pour tout
produit de longueur strictement inf\'erieure \`a $m\geq4$.     

Nous ferons la d\'emonstration en consid\'erant $T$ comme un produit
$Q_{0}P_{0}$ avec $P_{0}$ un produit droit de longueur $n>1$.     \\
 Regardons ce qui se passe avec un produit $T=Q_0P_0$ de longueur $m>n$.     \\
 Alors d'apr\`es la proposition \ref{prop:P-som}, 
\[
T=Q_{0}P_{0}=\sum_{i=1}^{n-1}Q_{i-1}P_{i}a_{i}
+Q_{n-1}a_{n}-\sum_{i=1}^{n-1}J\left(Q_{i-1},P_{i},a_{i}\right).     
\]
Les produits suivants $Q_{i-1}P_{i}$ pour $1\leq i\leq n-1$ et $Q_{n-1}$
sont des produits de longueur \'egale \`a $m-1$ et sont donc des combinaisons
lin\'eaires de produits droits de longueur $m-1$ modulo l'id\'eal $J(A,A,A)$.     
Ceci montre que $T$ est combinaison lin\'eaire de produits
droits de longueur $m$ modulo l'id\'eal $J(A,A,A)$.      
\end{proof}

\section{Produits droits de poids $n$ dans un id\'eal $B$}\label{poids}

\begin{lemm} \emph{\label{lem:-prod-normo}}Tout produit de longueur
$m$ et de poids $n$ relativement \`a $B$ dans une alg\`ebre de Malcev
est combinaison lin\'eaire de produits normaux de longueur $m$ et de
poids $n$.
\end{lemm}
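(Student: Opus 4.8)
The plan is to induct on the length $m$ (the number of atomic factors), the weight being carried along for free: by the bookkeeping of Remarque \ref{remq}, each re-association below preserves the multiset $\Lambda$ of atomic factors, so every summand produced has the same length $m$ and the same weight $n$ as the product we start from, and it suffices to control the \emph{shape} of the summands. A product of length $\le 3$ is already normal, so let $m\ge 4$, assume the statement for all smaller lengths, and let $T$ have length $m$. Writing $T=Q_0P_0$ for its top factorization and normalizing the two factors through the induction hypothesis, we may assume $Q_0$ and $P_0$ are normal; in particular $P_0=S_1S_2\cdots S_p$ is a right product of blocks, and Proposition \ref{prop:P-som}, applied with these $p$ blocks as the factors, yields
\[
T=Q_0P_0=\sum_{i=1}^{p-1}Q_{i-1}P_ia_i+Q_{p-1}a_p-\sum_{i=1}^{p-1}J\!\left(Q_{i-1},P_i,a_i\right),\qquad a_i\in\{S_1,\dots,S_p\}.
\]

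The non-Jacobian summands are immediate. Each factor $Q_{i-1}P_i$, and likewise $Q_{p-1}$, omits all the letters of the block $a_i$ (resp. $a_p$), hence has length $<m$; by the induction hypothesis it is a combination of normal products, and right-multiplying a normal product by the single block $a_i$ merely appends one more block, so normality is preserved. This already disposes of the case $p=1$ without any Jacobian: if the right factor $P_0$ is a single right product of letters, then $T=Q_0P_0=\bigl(\sum N\bigr)P_0=\sum NP_0$ is visibly a sum of normal products. Thus the entire difficulty is concentrated in the Jacobian terms $J(Q_{i-1},P_i,a_i)$.

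The reduction target for a Jacobian is clear: once its three arguments are blocks, $J(X,Y,Z)=(XY)Z+(YZ)X+(ZX)Y$ is plainly a sum of three normal products (of at most three blocks each). So one wants to rewrite $J(Q_{i-1},P_i,a_i)$ as a combination of Jacobians with block arguments, plus honest normal products. Here the naive move fails: expanding the cyclic sum and re-associating by anticommutativity alone is circular, because $P_ia_i$ can reproduce the \emph{whole} right factor $P_0$, so the cyclic term $(P_ia_i)Q_{i-1}$ reconstitutes $\pm T$ and the identity collapses to $0=0$. The non-circular instrument is the defining Malcev identity (Définition \ref{def1.1}, item~5),
\[
J(x,y,z)\,t=J(t,x,zy)+J(t,y,xz)+J(t,z,yx),
\]
which genuinely uses the Malcev law rather than mere anticommutativity: read from right to left it trades three Jacobians for a single Jacobian right-multiplied by one factor, expelling that factor to the outside — where, as above, it appends as a block — while lowering the number of blocks still trapped inside the Jacobians.

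The hard part, and the crux of the whole lemma, is to arrange this into a terminating secondary induction. For fixed length $m$ I would induct on a complexity measuring how many blocks remain buried inside the Jacobian arguments (equivalently, how far the arguments are from being single blocks), decreasing it by item~5 and keeping every error term a combination of normal products of length $m$ and weight $n$ by means of Lemme \ref{drf} and Lemme \ref{lem:nilpo1}. Making this measure drop strictly in \emph{every} term produced — notably in the self-referential $i=1$ term, where $P_1a_1$ equals the full right factor — is exactly the delicate point glossed over by the earlier arguments (cf. \cite{shest}), and is where I would place the weight of the verification.
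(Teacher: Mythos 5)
Your argument leaves precisely the decisive step unproved, and this is a genuine gap, not a presentational one. The lemma asserts an \emph{exact} identity in $A$ --- a linear combination of normal products of the same length and weight, with no correction ideal --- whereas the route through Proposition \ref{prop:P-som} inevitably produces the remainders $J\left(Q_{i-1},P_i,a_i\right)$, and you concede at the end that the well-founded measure for your secondary induction (in particular its strict decrease in the self-referential term where $P_1a_1$ reconstitutes the whole right factor $P_0$) is not exhibited. Moreover, the auxiliary results you propose to lean on cannot serve here: Lemme \ref{lem:nilpo1} only yields decompositions \emph{modulo} the ideal $J(A,A,A)$, and Lemme \ref{drf} concerns the ideals $B_k=B^k+J(B,A,A)$; at best these would give the present statement modulo a Jacobian ideal, which is strictly weaker than what is claimed and is exactly the content of Lemmes \ref{lem:nilpo1} and \ref{lienon-1}, proved separately. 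As written, your proposal proves nothing beyond those ``modulo $J$'' statements.

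The paper closes the argument by avoiding Jacobians altogether: it never invokes Proposition \ref{prop:P-som} for this lemma, but uses the linearized Malcev identity, item 4 of la D\'efinition \ref{def1.1},
\[
(xz)(yt)=((xy)z)t+((yz)t)x+((zt)x)y+((tx)y)z,
\]
valid in characteristic $\neq2$. A product of length $m+1\geq5$ is at top level either $P_0a_0$ (normalize $P_0$ by the induction hypothesis and append the letter $a_0$ as a one-letter block), or $\left(P_4P_3\right)\left(P_2P_1\right)$, which the identity rewrites \emph{exactly} as $P_4P_2P_3P_1+P_1P_4P_2P_3+P_3P_1P_4P_2+P_2P_3P_1P_4$; the induction hypothesis normalizes the shorter prefixes $P_4P_2P_3$, $P_1P_4P_2$, $P_3P_1P_4$, $P_2P_3P_1$, and the residual multiplications by the remaining factor are resolved by iterating the same identity --- every rewriting being a re-association of the same multiset of letters, length and weight are preserved term by term, as your own bookkeeping via la Remarque \ref{remq} correctly anticipates. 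In short, identity 4 does for free, at the level of exact identities, what you are asking identity 5 to do through a recursion you have left open; to rescue your route you would have to carry out in full the elimination of the $J\left(Q_{i-1},P_i,a_i\right)$ with an explicit decreasing measure, and without it the proof does not stand.
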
       

\begin{proof} Le lemme est \'evident si la longueur $m$ du produit  est
inf\'erieure ou \'egale \`a trois.      
Si $m=4$, alors il existe quatre \'el\'ements $a_{1},a_{2},a_{3},a_{4}$
dans $A$ tels que $P$ est \'egal \`a l'une des combinaisons de produits
droits suivants : $$a_{4}a_{3}a_{2}a_{1}\textnormal{ et }\left(a_{4}
a_{3}\right)\left(a_{2}a_{1}\right)
=a_{4}a_{3}a_{2}a_{1}+a_{1}a_{4}
a_{3}a_{2}+a_{2}a_{1}a_{4}a_{3}
+a_{3}a_{2}a_{1}a_{4}. $$    

Supposons le lemme vrai pour tout produit de longueur inf\'erieure ou
\'egal \`a $m$.      Soit $P$ un produit de longueur $m+1$.      Alors $P$ s'\'ecrit :\\
\noindent Soit $P_{0}a_{0}$ avec la longueur de  $P_0$ \'egal \`a $m$,\\ 
\noindent Soit $\left(P_{4}P_{3}\right)\left(P_{2}P_{1}\right)=P_{4}P_{2}P_{3}P_{1}+P_{1}P_{4}P_{2}P_{3}+P_{3}P_{1}P_{4}P_{2}+P_{2}P_{3}P_{1}P_{4}$
 avec  la longueur de chaque $P_i$ dans    $\Ind(m-1)$.\par       
 L'hypoth\`ese
de r\'ecurrence montre que $P_0$, $P_4P_2P_3$, $P_1P_4P_2$, $P_3P_1P_4$, $P_2P_3P_1$ sont combinaisons
lin\'eaires de produits normaux conservant les longueur et poids initiaux.      Ainsi $P$ l'est aussi.    Il est \'evident que par construction des termes  la longueur et le poids de $P$ sont conserv\'es par chacun des termes.
\end{proof}

\begin{lemm}\label{lienon-1} Soient A une alg\`ebre de Malcev
et $B$ un id\'eal non nul de $A$.      Tout produit $T$ de $A$ de longueur
$m\ge1$ et de poids $n\ge1$ relativement \`a $B$ est combinaison
lin\'eaire de produits droits de longueur $m$ et de poids $n$ relativement
\`a $B$, modulo l'id\'eal $J(B,A,A)$. 
\end{lemm}     

\begin{proof} 
Le lemme est \'evident si 
 $T$ est de longueur  $m\leq3$.     
 \par 
Posons l'hypoth\`ese suivante : 
la relation est vraie pour tout produit $T$ dont la longueur
est strictement inf\'erieure \`a $m$.\par 
Soit $Q_0P_0$ un produit normal
de poids $n$ relativement \`a $B$ et de longueur $m=p'+p$ o\`u $Q_{0}$
(respectivement $P_{0}$) est un produit droit de longueur $p'$ (respectivement
$p$).      Alors d'apr\`es la proposition \ref{prop:P-som}, 
$$
Q_0P_0=\ds\sum_{i=1}^{p-1}Q_{i-1}P_{i}a_{i}+Q_{p-1}a_{p}-
\sum_{i=1}^{p-1}J\left(Q_{i-1},P_{i},a_{i}\right).     
$$

Les produits  $Q_{i-1}P_{i}$ ,
 $Q_{p-1}$  ( o\`u $1\leq i\leq p-1$)
sont des produits de longueur \'egale \`a $m-1$.\\
Ils sont donc des combinaisons  
lin\'eaires de produits droits de longueur $m-1$ modulo l'id\'eal $J(B,A,A)$.     \\
D'o\`u $Q_{0}P_{0}$ est combinaison lin\'eaire de produits
droits de longueur $m$ modulo l'id\'eal $J(B,A,A)$.     

Grace aux \'equations \ref{rem1.1}-\ref{rem1.4}
, il est clair que pour $i\in\Ind(p-1)$, le poids relativement
\`a $B$ du produit normal $Q_{i-1}P_{i}a_{i}$ est $n$. Il en est de m\^eme pour le poids du produit $Q_{p-1}a_{p} $, relativement
\`a $B$.
\end{proof}

\begin{lemm} \label{Bn} Soient $A$ une alg\`ebre de Malcev $A$ de dimension
finie et $B$ un id\'eal non nul de $A$.      Soit $P_{0}=a_{m}a_{m-1}a_{m-2}\cdots a_{3}a_{2}a_{1}$
un produit droit de longueur $m$ et de poids sup\'erieur ou \'egal \`a
$n\geq1$, relativement \`a $B$.      Alors $P_{0}$ appartient \`a l'id\'eal
$B_{n}$. 
\end{lemm}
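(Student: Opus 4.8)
The plan is to argue by induction on the length $m$ of the right product, peeling off the innermost factor $a_1$ one step at a time. Concretely, I would prove the slightly stronger statement that \emph{every} right product $P$ of length $m$ lies in $B_{w}$, where $w=\#_B(P)$ is its weight relative to $B$. The lemma then follows immediately: since $\#_B(P_0)\ge n$ and the ideals $B_k$ are decreasing by Lemma \ref{drf}, we get $P_0\in B_{\#_B(P_0)}\subseteq B_n$.

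For the base case $m=1$ the product $P=a_1$ is either in $B=B_1$ (if $a_1\in B$, so $w=1$) or in $A=B_0$ (if $a_1\notin B$, so $w=0$); in both cases $P\in B_w$. For the inductive step I would write $P=P'a_1$, where $P'=a_m\cdots a_2$ is a right product of length $m-1$ and weight $w'=\#_B(P')$, so that by the induction hypothesis $P'\in B_{w'}$. If $a_1\notin B$ then $w=w'$ and, since $B_{w'}$ is an ideal (Lemma \ref{drf}), $P=P'a_1\in B_{w'}A\subseteq B_{w'}=B_w$. If $a_1\in B$ then $w=w'+1$, and I would conclude $P=P'a_1\in B_{w'}B\subseteq B_{w'+1}=B_w$.

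The one point that needs care — the ``hard part'', such as it is — is the containment $B_{w'}B\subseteq B_{w'+1}$ used in the second case. I would check it directly from the definitions: for $w'=0$ it reads $AB\subseteq B=B_1$ (as $B$ is an ideal); for $w'=1$ it is $BB=B^2\subseteq B_2$; and for $w'\ge2$, writing $B_{w'}=B^{w'}+J(B,A,A)$, it reduces to the two facts $B^{w'}B\subseteq B^{w'+1}$ and $J(B,A,A)B\subseteq J(B,A,A)$. The first holds because appending a factor of $B$ to a right product of length $w'$ and weight $w'$ yields a right product of length $w'+1$ and weight $w'+1$; the second holds because $J(B,A,A)$ is an ideal. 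Note that this argument is purely combinatorial on right products and uses neither Proposition \ref{prop:P-som} nor the weight bookkeeping of Remark \ref{remq}: the systematic association to the right lets the last factor $a_1$ be split off cleanly, so the Malcev identities enter only implicitly, through the stability properties of the $B_k$ recorded in Lemma \ref{drf}.
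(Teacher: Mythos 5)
Your proposal is correct and follows essentially the same route as the paper: the paper's own proof also peels the right product factor by factor, absorbing the $A$-factors via the ideal property of the $B_j$ (Lemme \ref{drf}) and stepping up through $B_jB\subseteq B^{j+1}+J(B,A,A)=B_{j+1}$ at each $B$-factor, merely organizing the induction over the $n$ selected $B$-factors (via an increasing injection $\sigma$ marking their positions, with the intervening runs of $A$-factors absorbed in blocks) rather than over the total length $m$. Your explicit check of $B_{w'}B\subseteq B_{w'+1}$, including the edge cases $w'=0,1$, is exactly the containment the paper uses implicitly in its inductive step.
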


\begin{proof} Soit $\sigma$ une injection croissante de $\mathbb{I}(n)$
dans $\mathbb{I}(m)$ telle que pour
tout $j\in\mathbb{I}(n),a_{\sigma(j)}\in B$.      Posons pour tout entier
$1\leq j\leq n-1$, 
$$\begin{aligned}
Q'_{0}=&a_{m}a_{m-1}\cdots a_{\sigma(n)+1},\\ 
Q_{j}=&Q'_{j-1}a_{\sigma(n-j+1)}, \\                   
Q'_{j}=&Q_{j}a_{\sigma(n-j+1)-1}\cdots a_{\sigma(n-j)+1},\\ 
Q_{n}=&Q'_{n-1}a_{\sigma(1)}\cdots a_{1}.                       
\end{aligned}$$
 
Il est clair que, $Q_{1}\in B^{1}\subseteq B^{1}+J(B,A,A)=B_1\textnormal{ et }Q'_{1}\in B$ ;\\
 $Q_{2}=Q'_{1}a_{\sigma(n-j+1)}\in B.     B\subseteq B^{2}+J(B,A,A)=B_{2}$.     \\
 Supposons que pour $1\leq j<n$, $Q_{j}\in B_{j}=B^{j}+J(B,A,A)$
et montrons que $Q_{j+1}\in B_{j+1}=B^{j+1}+J(B,A,A)$.     \\
En exploitant
l'hypoth\`ese, on a que\\
 $Q'_{j}=Q_{j}a_{\sigma(n-j+2)-1}\cdots a_{\sigma(n-j+1)+1}$ appartient
\`a l'id\'eal $B_{j}$.      Par suite\\
 $Q_{j+1}=Q'_{j}a_{\sigma(n-j+1)}\in B_{j}.     B\subseteq B^{j+1}+J(B,A,A)=B_{j+1}$.      \\
Alors $P_{0}=Q_{n}\in B_{n}$.     \hfill $\square$
\end{proof}

\begin{lemm}\label{lem:laqt} Soient $A$ une alg\`ebre de Malcev  %
 de dimension finie et $B$ un id\'eal $J_{k}$-nil dans $A$.      Soit $\ell$
un entier sup\'erieur ou \'egale \`a $k$.      Soit un produit droit 
$P=a_{m}a_{m-1}a_{m-2}\cdots a_{3}a_{2}a_{1}$
de longueur $m$ et de poids $n$ sup\'erieur ou \'egal \`a $2\ell$, relativement
\`a $B$.      Alors $P\in B_{(A,k)}^{\ell}$.
\end{lemm}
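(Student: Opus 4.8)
The plan is to locate a single cut point in the right product $P$ that splits it into a left block of weight exactly $\ell$ and a right block carrying at least $k$ trailing factors, then to apply Lemme \ref{Bn} to the left block while killing all error terms by means of the $J_k$-nil hypothesis.

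First I would count the factors of $P$ lying in $B$. Since $\#_{B}(P)=n\geq 2\ell$, there are at least $2\ell$ indices $i$ with $a_i\in B$; list their positions in increasing order as $p_1<p_2<\cdots<p_t$ with $t\geq 2\ell$. I then set $s=p_{t-\ell+1}-1$ and write $P=L\,a_s a_{s-1}\cdots a_1$ as a right product, where $L=a_m a_{m-1}\cdots a_{s+1}$. By construction the block $L$ contains exactly the $\ell$ topmost factors $a_{p_{t-\ell+1}},\dots,a_{p_t}$ belonging to $B$, so $\#_{B}(L)=\ell$, whereas the right block $a_s\cdots a_1$ still contains the $t-\ell\geq \ell\geq k$ factors $a_{p_1},\dots,a_{p_{t-\ell}}$; in particular $s\geq t-\ell\geq\ell\geq k$. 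This is precisely where both hypotheses $n\geq 2\ell$ and $\ell\geq k$ are consumed, and I expect this bookkeeping to be the first delicate point.

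Next, since $L$ is a right product of weight $\ell$, Lemme \ref{Bn} gives $L\in B_\ell=B^\ell+J(B,A,A)$, so I may write $L=x+j$ with $x\in B^\ell$ and $j\in J(B,A,A)$. As right multiplication is linear at each step, $P=x\,a_s\cdots a_1+j\,a_s\cdots a_1$. The second summand lies in $J(B,A,A)_{(A,s)}$; because $s\geq k$ and $B$ is $J_k$-nil, one has $J(B,A,A)_{(A,s)}=\{0\}$. Indeed the relation $J(B,A,A)_{(A,k)}=\{0\}$ propagates to every larger number of factors, since appending one more factor to a vanishing right product yields $0\cdot a=0$; hence $J(B,A,A)_{(A,s)}=J(B,A,A)_{(A,s-1)}\cdot A=\{0\}$ for all $s\geq k$. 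Thus this term disappears, and it remains only to control $x\,a_s\cdots a_1\in B^\ell_{(A,s)}$.

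The final step, which I regard as the main obstacle, is the inclusion $B^\ell_{(A,s)}\subseteq B^\ell_{(A,k)}$ for every $s\geq k$. I would prove it by peeling off trailing factors one at a time: for $d\in B^\ell$ and $a\in A$, Lemme \ref{drf} shows that $B_\ell$ is an ideal, whence $da\in B^\ell A\subseteq B_\ell=B^\ell+J(B,A,A)$; writing $da=d'+j'$ with $d'\in B^\ell$ and $j'\in J(B,A,A)$ turns a generator $d\,a_s\cdots a_1$ into $d'\,a_{s-1}\cdots a_1+j'\,a_{s-1}\cdots a_1$, where the main term has one fewer trailing factor and the error term $j'\,a_{s-1}\cdots a_1$ lies in $J(B,A,A)_{(A,s-1)}$. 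Iterating from $s$ down to $k$, each error term still carries at least $k$ trailing factors and therefore vanishes by the $J_k$-nil hypothesis, leaving an element of $B^\ell_{(A,k)}$. Combining this with the previous paragraph gives $P=x\,a_s\cdots a_1\in B^\ell_{(A,k)}$, which is the desired conclusion. I note that finite dimensionality of $A$ is not actually used in this argument.
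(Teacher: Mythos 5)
Your proof is correct, but your choice of cut point differs from the paper's and makes the argument longer than it needs to be. The paper cuts $P$ at exactly $k$ factors from the right end: with $Q=a_{m}a_{m-1}\cdots a_{k+1}$, the tail $a_{k}\cdots a_{1}$ has length $k$, hence weight $n''\leq k$, so $\#_{B}(Q)=n'\geq n-k\geq 2\ell-k\geq\ell$; Lemme \ref{Bn} then gives $Q\in B_{\ell}$, and $P=Qa_{k}\cdots a_{1}\in\left(B_{\ell}\right)_{(A,k)}=\left(B^{\ell}+J(B,A,A)\right)_{(A,k)}=B^{\ell}_{(A,k)}$ in a single step, since the $J_{k}$-nil hypothesis applies verbatim to a tail of exactly $k$ factors. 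You instead cut so that the head $L$ has weight exactly $\ell$, which makes $L\in B_{\ell}$ immediate but leaves a tail of variable length $s\geq k$; this forces you to establish two propagation facts the paper never needs: that $J(B,A,A)_{(A,s)}=\{0\}$ for all $s\geq k$ (correct, by the left-to-right association of right products), and that a generator of $B^{\ell}_{(A,s)}$ reduces to $B^{\ell}_{(A,k)}$ modulo vanishing error terms, which requires invoking Lemme \ref{drf} (that $B_{\ell}$ is an ideal) in an inductive peeling argument. Both propagation steps check out, and your weight bookkeeping ($s\geq t-\ell\geq\ell\geq k$, consuming $n\geq 2\ell$ and $\ell\geq k$) is sound, so the proof is complete. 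What the paper's cut buys is that the conclusion becomes essentially definitional, with no iteration; what your route buys is the reusable standalone inclusion $B^{\ell}_{(A,s)}\subseteq B^{\ell}_{(A,k)}$ for $s\geq k$ under the $J_{k}$-nil hypothesis, a fact worth recording even though it is avoidable here. Your closing observation that finite dimensionality is never used applies equally to the paper's own proof.
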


\begin{proof}
Le produit droit $Q=a_{m}a_{m-1}a_{m-2}\cdots a_{k+1}$
est de poids sup\'erieur ou \'egal \`a $\ell$, en effet posons $n'$ le poids
de $Q$ et $n''$ le poids de $a_{k}\cdots a_{3}a_{2}a_{1}$.      On a
$0\leq n''\leq k$ et l'\'egalit\'e $P=Qa_{k}\cdots a_{3}a_{2}a_{1}$
nous donne $n'\leq n\leq k+n'$.      Ainsi $n'\geq n-k\geq2\ell-k\geq\ell$.     
Le lemme \ref{Bn} nous dit que $Q\in B_{\ell}$.      Ainsi $P=Qa_{k}\cdots a_{3}a_{2}a_{1}\in\left(B_{\ell}\right)_{(A,k)}=\left(B^{\ell}+J(B,A,A)\right)_{(A,k)}=B^{\ell}_{(A,k)}$
 car $B$ est un id\'eal \textit{$J_{k}$-nil} dans $A$.
\end{proof} 

\begin{lemm}\label{lem:2n+1} Soit $B$ un id\'eal $J_{k}$-nil de
l'alg\`ebre de Malcev $A$.      Soit $P$ un produit de poids $t\geq4k^2-2k+1$,
relativement \`a l'id\'eal $B$.      Alors $P$ est combinaison lin\'eaire de
produits normaux $Q_j$ ($P=\ds\sum_{j \textnormal{ fini}}\mu_{j}Q_{j}$)  
tels que, pour $j$ fix\'e on a $Q_{j}$ est dans $\left(B^{k}\right)_{(A,k)}$ ou comporte au moins un facteur dans $\left(B^{k}\right)_{(A,k)}$.     
\end{lemm}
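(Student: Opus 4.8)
The plan is to first collapse $P$ to a single normal product and then run a pigeonhole dichotomy on the weights of its right-factors. By Lemma \ref{lem:-prod-normo}, the product $P$ is a linear combination $P=\sum_{j}\mu_{j}N_{j}$ of normal products $N_{j}$ of the same weight $t\ge 4k^{2}-2k+1$ relative to $B$, so it suffices to treat one normal product $N=S_{1}S_{2}\cdots S_{p}$, where each $S_{i}$ is a right product of weight $n_{i}=\#_{B}(S_{i})$ and $\sum_{i=1}^{p}n_{i}=t$. The key remark is that $N$, read as an element of $A$, is literally a right product of the elements $S_{1},\dots,S_{p}$, so Lemmas \ref{Bn} and \ref{lem:laqt} may be applied at this ``super-letter'' level once we know which $S_{i}$ lie in $B$.

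First I would dispose of the heavy case: if some factor satisfies $n_{i}\ge 2k$, then since its length is at least its weight, $S_{i}$ is a right product of weight $\ge 2k=2\ell$ with $\ell=k$ (which meets the hypothesis $\ell\ge k$), and Lemma \ref{lem:laqt} gives $S_{i}\in (B^{k})_{(A,k)}$; thus $N$ has a factor in $(B^{k})_{(A,k)}$ and this $N$ is settled. Otherwise every factor satisfies $n_{i}\le 2k-1$. Let $s=\#\{i:n_{i}\ge 1\}$ be the number of factors of positive weight. Each such factor, being a right product of weight $\ge 1$, lies in $B_{1}=B$ by Lemma \ref{Bn}, so in the right product $N=S_{1}\cdots S_{p}$ exactly $s$ of the super-letters belong to $B$.

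The pigeonhole step then forces $s$ to be large: since factors of weight $0$ contribute nothing to $t$ and each positive-weight factor contributes at most $2k-1$, we have $t\le s(2k-1)$; were $s\le 2k-1$ we would get $t\le (2k-1)^{2}=4k^{2}-4k+1<4k^{2}-2k+1\le t$, a contradiction. Hence $s\ge 2k$, that is, the super-weight of $N$ is $\ge 2k=2\ell$ with $\ell=k$, and its super-length $p\ge s\ge 2k\ge k+1$. Applying Lemma \ref{lem:laqt} a second time, now to the super-letters, yields $N\in (B^{k})_{(A,k)}$ directly. In both cases each $N_{j}$ either lies in $(B^{k})_{(A,k)}$ or has a factor there, which is exactly the stated conclusion.

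The main obstacle is conceptual rather than computational: one must be comfortable applying Lemmas \ref{Bn} and \ref{lem:laqt}, which are phrased for right products of arbitrary elements of $A$, to the normal product $N$ viewed as a right product whose letters are the $S_{i}$. The point to check with care is that a right-product factor of positive $B$-weight genuinely counts as an element of $B$ (via $B_{1}=B$ in Lemma \ref{Bn}), so that the super-weight $s$ equals the number of positive-weight factors and the hypothesis of Lemma \ref{lem:laqt} is met at the super-level. The quadratic threshold $4k^{2}-2k+1$ is precisely what the pigeonhole on factors of weight at most $2k-1$ needs in order to force either one factor of weight $\ge 2k$ or at least $2k$ factors lying in $B$.
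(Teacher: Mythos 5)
Your proof is correct and takes essentially the same route as the paper's: reduction to normal products via Lemma \ref{lem:-prod-normo}, then the identical dichotomy --- one factor of weight $\geq 2k$ is handled directly by Lemma \ref{lem:laqt}, and otherwise a pigeonhole on factors of weight at most $2k-1$ (using the threshold $4k^{2}-2k+1$) forces at least $2k$ positive-weight factors, after which Lemma \ref{lem:laqt} is applied to the normal product read as a right product of the values of its factors. Your explicit appeal to Lemma \ref{Bn} (via $B_{1}=B$) to justify that each positive-weight factor lies in $B$ only makes precise a step the paper leaves implicit.
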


\begin{proof}  Soient $k>1$ et $t\geq4k^2-2k+1$.      D'apr\`es
le lemme \ref{lem:-prod-normo}, tout produit $P$ de poids sup\'erieur
ou \'egal \`a $t$ est combinaison lin\'eaire de produits normaux de poids
sup\'erieur ou \'egal \`a $t$.      Soit $P=\ds\sum_{j}\mu_{j}Q_{j}$ o\`u $Q_{j}$
est un produit normal de poids sup\'erieur ou \'egal \`a $t$.\par      
Pour $j$
fix\'e on a $Q_{j}=S_{j,p}S_{j,p-1}\cdots S_{j,1}$ o\`u $S_{j,i}$ est
un produit droit (avec $p$ le nombre de facteurs $S_{j,i}$ de $Q_{j}$).      
\begin{description}\label{desc}
\item[-] Si un des
produits droits $S_{j,i_{0}}$ poss\`edent un poids sup\'erieur ou \'egal
\`a $2k$, alors $S_{j,i_{0}}$ appartient \`a $\left(B^{k}\right)_{(A,k)}$
d'apr\`es le lemme \ref{lem:laqt}.     Et alors $Q_{j}$ poss\`ede un facteur
dans $\left(B^{k}\right)_{(A,k)}$. 
      
\item[-] Sinon, tous les facteurs $S_{j,i}$ poss\`edent un poids strictement
inf\'erieur \`a $2k$.      Soit $q$ le nombre de facteurs $S_{j,i}$
ayant un poids sup\'erieur ou \'egal \`a $1$.

On a alors $q\left(2k-1\right)\geq t=4k^2-2k+1$\\
Par suite  $q>2k$. Rempla\c cons chacun des produits $S_{j,i}$ par
sa valeur $s_{j,i}=S_{j,i}\in A$.  Lorsque le poids de $S_{j,i}$ est
sup\'erieur ou \'egal \`a $1$, on a $s_{j;i}\in B$.     
Ainsi $Q_{j}$ se met sous la forme $Q_{j}$ se met sous la forme d'un produit droit $s_{j,p}s_{j,p-1}\cdots s_{j,1}$
de longueur $p$ et de poids $q$ dans $B$. 
Comme $q\geq2k$, on a $Q_{j}\in\left(B^{k}\right)_{(A,k)}$
d'apr\`es le lemme \ref{lem:laqt}. 
\end{description}       
Cela ach\`eve la d\'emonstration.
\end{proof}\par 
Par un abus de language, on dira que $Q_j$ poss\`ede un facteur dans $\left(B^{k}\right)_{(A,k)}$ si (cf. les cas cit\'es dans la preuve pr\'ec\'edente) :
\begin{description}
\item[-]  un des
produits droits $S_{j,i_{0}}$  appartient \`a $\left(B^{k}\right)_{(A,k)}$,
\item[-] on a $Q_{j}\in\left(B^{k}\right)_{(A,k)}$.
\end{description}

\section{Th\'eor\`eme principal}\label{mainresult}

\begin{theo}\label{th1} Soient $K$ un corps commutatif de caract\'eristique diff\'erente
de 2, $A$ une $K$-alg\`ebre de Malcev et 
 $B$ un id\'eal $J_{k'}$-nil de Malcev $A$.      Alors les conditions
suivantes suivantes sont \'equivalentes : 
\begin{description}
\item [{(i)}] $B$ est \textit{nilpotent \`a droite} ; 
\item [{(ii)}] $B$ est \textit{nilpotente} ;
 
\item [{(iii)}] $B$ est \textit{fortement nilpotente}.      ;
\end{description}
\end{theo}
\begin{proof} En effet, pour tout entier $k>1$, les inclusions d'espaces
vectoriels\\ $B^{k}\subseteq B^{\{k\}}\subseteq B^{\left\langle k\right\rangle }$
nous montrent que $(iii)\Rightarrow(ii)\Rightarrow(i)$.     \\
 Par ailleurs, supposons qu'il existe un entier $\ell'\geq1$ tel que
$B^{\ell'}=\left\{ 0\right\} $.      Posons $k=\max\left\{ k',\ell'\right\} $,
alors pour $\ell=4k^2-2k+1$, le lemme \ref{lem:2n+1} nous dit
que tout produit $P$ de poids $\geq \ell=4k^2-2k+1$, relativement
\`a l'id\'eal $B$ est combinaison lin\'eaire de produits normaux dont chacun
comporte au moins un facteur dans $\left(B^{k}\right)_{(A,k)}\subseteq\left(B^{k'}\right)_{(A,k)}=\left\{ 0\right\} $.     
Il s'ensuit que $P=0$ et ainsi $B^{\left\langle \ell\right\rangle }=0$.     
Prouvant ainsi que $(i)\Rightarrow(iii)$.     
\end{proof}

\begin{coro}\label{equiv} Soient $K$ un corps commutatif de caract\'eristique diff\'erente
de 2, $A$ une $K$-alg\`ebre de Malcev, 
Les conditions suivantes suivantes sont \'equivalentes : 
\begin{description}
\item [{(i)}] $A$ est \textit{nilpotent \`a droite} ; 
\item [{(ii)}] $A$ est \textit{nilpotente} ;
 
\item [{(iii)}] $A$ est \textit{fortement nilpotente}.      
\end{description} 
\end{coro}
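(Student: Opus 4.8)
The plan is to deduce this corollary from Theorem~\ref{th1} by specialising to the ideal $B=A$. The equivalence for a general $J_{k'}$-nil ideal is already available, so the only genuinely new point to check is that the algebra $A$, regarded as an ideal of itself, is automatically $J_{k'}$-nil as soon as it is right nilpotent. Once this is settled, conditions (i), (ii), (iii) of the corollary are exactly conditions (i), (ii), (iii) of Theorem~\ref{th1} read for $B=A$.

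First I would record the trivial implications. The inclusions of subspaces $A^{k}\subseteq A^{\{k\}}\subseteq A^{\langle k\rangle}$ (a right product of length $k$ is in particular a product of $k$ elements of $A$, which in turn is a product containing at least $k$ factors of $A$) give $(iii)\Rightarrow(ii)\Rightarrow(i)$, exactly as in the proof of Theorem~\ref{th1}. It therefore remains only to establish $(i)\Rightarrow(iii)$.

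Assume then that $A$ is right nilpotent, say $A^{n}=\{0\}$. Since a right product of length $m$ is of the form $(a_{m}\cdots a_{2})a_{1}$ with $a_{m}\cdots a_{2}\in A^{m-1}$, one has $A^{m}=A^{m-1}A$, whence $A^{m}=\{0\}$ for every $m\geq n$. The key computation is to show that $A$ is $J_{k'}$-nil for $k'=\max(1,n-3)$. Indeed, $J(A,A,A)$ is spanned by the jacobians $J(x,y,z)=(xy)z+(yz)x+(zx)y$, and for $a_{1},\dots,a_{k'}\in A$ the right expansion
\[
J(x,y,z)\,a_{k'}\cdots a_{1}=xyz\,a_{k'}\cdots a_{1}+yzx\,a_{k'}\cdots a_{1}+zxy\,a_{k'}\cdots a_{1}
\]
exhibits $J(x,y,z)\,a_{k'}\cdots a_{1}$ as a sum of three right products of length $k'+3\geq n$. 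Hence $J(A,A,A)_{(A,k')}\subseteq A^{k'+3}=\{0\}$, which is precisely the statement that $A$ is $J_{k'}$-nil.

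Having shown that $A$ is $J_{k'}$-nil with $k'\leq n$, I would conclude by invoking Theorem~\ref{th1} with $B=A$: since $A$ is right nilpotent it is strongly nilpotent, giving $(i)\Rightarrow(iii)$. Moreover, following the quantitative part of that proof with $k=\max(k',n)=n$, the strong nilpotence index is bounded by $4n^{2}-2n+1$, the estimate announced in the abstract. I expect the only delicate point to be the key computation above, namely the verification that each jacobian, once multiplied on the right by elements of $A$, collapses into honest right products of the predicted length $k'+3$; everything else is a direct appeal to results already proved.
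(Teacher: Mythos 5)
Your proof is correct and follows essentially the same route as the paper: the paper likewise deduces the corollary from Theorem~\ref{th1} applied to $B=A$, observing that $J(A,A,A)\subseteq A^{3}$ so that $J(A,A,A)_{(A,\ell')}\subseteq A^{\ell'+3}=\{0\}$ once $A^{\ell'}=\{0\}$, which is exactly your jacobian expansion into right products of length $k'+3$. The only (harmless) difference is your slightly sharper choice $k'=\max(1,n-3)$, where the paper simply takes $k'=\ell'$; both then conclude via the proof of Theorem~\ref{th1} and give the bound $4n^{2}-2n+1$.
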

\begin{proof} Remarquons d'abord que l'id\'eal $J\left(A,A,A\right)$
de $A$ est inclus dans $A^{3}$. Supposons que la condition (i) est v\'erifi\'ee et montrons (iii).\\
Il existe donc un entier $\ell'>1$ tel que  $\{0\}=A^{\ell'}$, par suite \\
$J(A,A,A)\underset{\ell'\textnormal{ facteurs}}{\underbrace{AAA\dots A}}\subseteq
(A^3)\underset{\ell'\textnormal{ facteurs}}{\underbrace{AAA\dots A}}\subseteq A^{\ell'+3}=\{0\}$. Ainsi $A$ est $J_{\ell'}-nil$. En s'appuiant sur la preuve du th\'eor\`eme \ref{th1}, il vient que $(i)\Rightarrow (iii)$.\\
Bien entendu, $(iii)\Rightarrow(ii)\Rightarrow(i)$ du fait des inclusion de sous espaces vectoriels : $B^{k}\subseteq B^{\{k\}}\subseteq B^{\left\langle k\right\rangle}$.
\end{proof}


\begin{thebibliography}{2010}
\bibitem{bere1} C\^ome J.      A.      B\'ER\'E, Superalg\`ebres de Malc\`ev, th\`ese de
3$^{{\tt i\grave{e}me}}$ cycle, Universit\'e de Ougadougou, 1997.     


\bibitem{beromfpil} C\^ome J.      A.      B\'ER\'E, M.      Fran\c coise OUEDRAOGO and Nakelgbamba B.      PILABR\'E, On the existence of ad-nilpotent elements, Afr.      Mat.      (2015) 26:813-823 DOI 10.     1007/s13370-014-0246-y.     

\bibitem{berpilkob} J.      A.      B\'ER\'E C\^ome, PILABR\'E N.      Boukary and KOBMBAYE Aslao, Lie's theorems on soluble Leibniz algebras.      British
journal of Mathematics \& Computer Science (2014) 4(18): p.      2570-2581.     

\bibitem{gmica1} Ch.      GERLEIN et A.      MICALI, Sur la nilpotence dans
les alg\`ebres de Malcev, C.      R.      Ac.      Sc.      Paris, 286(1978), 1091-1094.     

\bibitem{gmica2} Ch.      GERLEIN et A.      MICALI, Sur la nilpotence dans
les alg\`ebres de Malcev II (non publi\'e).     

\bibitem{akry} A.      KOULIBALY, Contribution \`a la th\'eorie des alg\`ebres
de Malcev, th\`ese de Doctorat d'Etat, Universit\'e de Montpellier, 1984.     

\bibitem{malek} A.      K.      ABD EL MALEK, Sur les alg\`ebres de       Malcev, th\`ese
 de Doctorat d'Etat, Universit\'e de Montpellier, 1985.     

\bibitem{shest} I.      SHESTAKOV, Lettre du 19 juin 1979 \`a Ch.      GERLEIN
et A.      MICALI.      

\end{thebibliography}
\end{document}